\theoremstyle{definition}
\newtheorem{definition}{Definition}[section]
\theoremstyle{plain}
\newtheorem{thm}[subsection]{Theorem}
\newtheorem{lem}[subsection]{Lemma}
\newtheorem{prop}[subsection]{Proposition}
\newcommand{\beq}{\begin{eqnarray}}
\newcommand{\eeq}{\end{eqnarray}}
\newcommand{\beqs}{\begin{eqnarray*}}
\newcommand{\eeqs}{\end{eqnarray*}}
\newcommand{\al}{\alpha}
\title{ \bf A note on polyomino chains with extremum general sum-connectivity index}
\author{
   \large \bf Akbar Ali\footnote{Corresponding author.} , Tahir Idrees
}
\affil{ \normalsize
    { Knowledge Unit of Science,\\  University of Management \& Technology, Sialkot-Pakistan}
    \\E-mail: {\tt akbarali.maths@gmail.com, tahiridrees32@gmail.com }
}
\date{Submitted on January 2, 2018}
\begin{document}

\maketitle

\begin{abstract}

The general sum-connectivity index of a graph $G$ is defined as $\chi_{\alpha}(G)= \sum_{uv\in E(G)} (d_u + d_{v})^{\alpha}$ where $d_{u}$ is degree of the vertex $u\in V(G)$, $\alpha$ is a real number different from $0$ and $uv$ is the edge connecting the vertices $u,v$. In this note, the problem of characterizing the graphs having extremum $\chi_{\alpha}$ values from a certain collection of polyomino chain graphs is solved for $\alpha<0$. The obtained results together with already known results (concerning extremum values of polyomino chain graphs) give the complete solution of the aforementioned problem.


\end{abstract}
%
%
%
%
%
%
%
%
\section[Introduction]{Introduction}

All graphs considered in this note are simple, finite and connected. Those notations and terminologies from graph theory which are not defined here can be found in the books \cite{Harary69,Bondy08}.

The connectivity index (also known as Randi\'{c} index and branching index) is one of the most studied graph invariants, which was introduced in 1975 within the study of molecular branching \cite{r3}. The connectivity index for a graph $G$ is defined as
\[R(G)=\displaystyle\sum_{uv\in E(G)}(d_{u}d_{v})^{-\frac{1}{2}},\]
where $d_{u}$ represents the degree of the vertex $u\in V(G)$ and $uv$ is the edge connecting the vertices $u,v$ of $G$. Detail about the mathematical properties of this index can be found in the survey \cite{Li08}, recent papers \cite{Ali17R,Cui17,Li16,Mansour17,Das-17,Gutman-18,An2} and related references contained therein.

Several modified versions of the connectivity index were appeared in literature. One of such versions is the sum-connectivity index \cite{Zhou09}, which is defined as
\[\chi(G)=\sum_{uv\in E(G)}(d_{u}+d_{v})^{-\frac{1}{2}}.\]
Soon after the appearance of sum-connectivity index, its generalized version was proposed \cite{Zhou10}, whose definition is given as
\[\chi_{\alpha}(G)=\sum_{uv\in E(G)}(d_{u}+d_{v})^{\alpha},\]
where $\alpha$ is a non-zero real number. In this note, we are concerned with the general sum-connectivity index $\chi_{\alpha}$. Details about $\chi_{\alpha}$ can be found in the recent papers \cite{Zhu16,Cui-17,Tomescu16,Ali17,Ali-18,Wang17,Ramane17,Akhter17,Arshad17,Jamil17} and related references listed therein. We recall that $2\chi_{-1}(G)=H(G)$, where $H$ is the harmonic index \cite{Fajtlowicz87}, and $\chi_{1}$ coincides with the first Zagreb index \cite{Gutman72}, whose mathematical properties can be found in the recent surveys \cite{Borovicanin-17,Borovicanin-MCM19,Ali18} and related references cited therein.

A polyomino system is a connected geometric figure obtained by concatenating congruent squares side to side in a plane in such a way that the figure divides the plane into one infinite (external) region and a number of finite (internal) regions, and all internal regions must be congruent squares. For possible applications of polyomino systems, see, for example, \cite{Liu-Ou-13,Zhang-13,Gutman-99,Harary-97} and related references mentioned therein.
Two squares in a polyomino system are adjacent if they share a side. A polyomino chain is a polyomino system in which every square is adjacent to at most two other squares. Every polyomino chain can be represented by a graph known as polyomino chain graph. For the sake of simplicity, in the rest of this note, by the term \textit{polyomino chain} we always mean \textit{polyomino chain graph}.


The problem of characterizing graphs having extremum $\chi_{\al}$ values over the collection of certain polyomino chains, with fixed number of squares, was solved in \cite{z,AA2,y2} for $\al=1$. The results established in \cite{deng} give a solution of the aforementioned problem for $\al=-1$. An and Xiong \cite{An18} solved this problem for $\al>1$. While, the same problem was also addressed in \cite{Ali-2016} and its solution for the case $0<\al<1$ was reported there. The main purpose of the present note is to give the solution of the problem under consideration for all remaining values of $\al$, that is, for $\al<-1$ and $-1<\al<0$.

\section{Main Results}

Before proving the main results, we recall some definitions concerning polyomino chains. In a polyomino chain, a square adjacent with only one (respectively two) other square(s) is called terminal (respectively non-terminal) square. A kink is a non-terminal square having a vertex of degree 2. A polyomino chain without kinks is called \textit{linear chain}. A polyomino chain consisting of only kinks and terminal squares is known as zigzag chain. A segment is a maximal linear chain in a polyomino chain, including the kinks and/or terminal squares at its ends. The number of squares in a segment $S_r$ is called its length and is denoted by $l(S_r)$ (or simply by $l_r$). If a polyomino chain $B_{n}$ has segments $S_{1}, S_{2},...,S_{s}$ then the vector $(l_{1},l_{2},...,l_{s})$ is called length vector of $B_n$. A segment $S_r$ is said to be external (internal, respectively) segment if $S_r$ contains (does not contain, respectively) terminal square.
\begin{definition}\cite{z}
For $2\leq i\leq s-1$ and $1\leq j\leq s$,
$$\alpha_{i} =
\begin{cases}
1 & \text{if } l_i=2\\
0 & \text{if } l_i\geq3
\end{cases}$$
$$\beta_{j}=
\begin{cases}
1 & \text{if } l_j=2\\
0 & \text{if } l_j\geq3
\end{cases}$$
and $\alpha_{1}=\alpha_{s}=0.$
\end{definition}
Let $\Omega_{n}$ be the collection of all those polyomino chains, having $n$ squares, in which no internal segment of length 3 has edge connecting the vertices of degree 3.

\begin{thm}\label{t1}
\cite{Ali-2016} Let $B_{n}\in\Omega_{n}$ be a polyomino chain having $s$ segment(s) $S_{1}, S_{2},S_{3},...,S_{s}$ with the length vector $(l_{1},l_{2},...,l_{s})$. Then,
\begin{eqnarray*}
\chi_\alpha(B_{n})&=&3\cdot 6^\alpha n+(2\cdot5^\alpha-6^{\alpha+1}+4\cdot 7^\alpha)s
+(2\cdot 4^\alpha+2\cdot 5^\alpha+6^\alpha-4\cdot 7^\alpha)\\
&&+ \ (2\cdot6^\alpha-5^\alpha- 7^\alpha)[\beta_{1}+\beta_{s}]+ \ (5\cdot 6^\alpha-2\cdot 5^\alpha-4\cdot 7^\alpha+ 8^\alpha)\sum_{i=1}^{s}\alpha_{i}.
\end{eqnarray*}
\end{thm}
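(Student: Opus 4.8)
The plan is to compute $\chi_\alpha(B_n)$ by the \emph{edge-partition method}: classify every edge $uv$ of $B_n$ according to the unordered pair $(d_u,d_v)$ of its endpoint degrees, count the edges of each type, and substitute into $\chi_\alpha(B_n)=\sum_{uv\in E(B_n)}(d_u+d_v)^\alpha$. The first task is to fix the degree sequence. I would observe that in any polyomino chain every vertex has degree $2$, $3$ or $4$, with degree $4$ occurring precisely at the inner corner of a kink; since a chain with $s$ segments has exactly $s-1$ kinks, the number $n_4$ of degree-$4$ vertices equals $s-1$. Combining $|E(B_n)|=3n+1$ and $|V(B_n)|=2n+2$ with the handshake identity $\sum_{v}d_v=2|E(B_n)|$ then gives the full degree distribution $n_2=s+3$, $n_3=2n-2s$, $n_4=s-1$, where $n_i$ denotes the number of degree-$i$ vertices.

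Next I would pin down three ``anchor'' edge-counts by direct local inspection. Write $m_{ij}$ for the number of edges joining a degree-$i$ vertex to a degree-$j$ vertex, and set $A=\sum_{i=1}^s\alpha_i$ (the number of internal segments of length $2$) and $B=\beta_1+\beta_s$ (the number of external segments of length $2$). The claims are: (i) $m_{22}=2$, since each of the two terminal squares contributes exactly one edge between two corner (degree-$2$) vertices and no other $(2,2)$-edge occurs; (ii) $m_{44}=A$, because a $(4,4)$-edge requires two adjacent kink-vertices, and two kink-vertices are adjacent exactly when they bound a common internal segment of length $2$; (iii) $m_{24}=2A+B$, obtained by examining the four neighbours of each kink-vertex, which all have degree $3$ in the generic case, a degree-$2$ neighbour appearing only when the incident segment is short, thereby contributing two such edges per internal length-$2$ segment and one per external length-$2$ segment.

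With $n_2,n_3,n_4$ and the anchors $m_{22},m_{24},m_{44}$ available, the remaining three counts follow from the degree-wise handshake identities $2m_{22}+m_{23}+m_{24}=2n_2$, $m_{23}+2m_{33}+m_{34}=3n_3$ and $m_{24}+m_{34}+2m_{44}=4n_4$, yielding $m_{23}=2s+2-2A-B$, $m_{34}=4s-4-4A-B$ and $m_{33}=3n-6s+1+3A+B$. Substituting all six counts into $\chi_\alpha(B_n)=m_{22}4^\alpha+m_{23}5^\alpha+(m_{24}+m_{33})6^\alpha+m_{34}7^\alpha+m_{44}8^\alpha$ and collecting the coefficients of $n$, $s$, $A$ and $B$ then reproduces the claimed expression (for instance the coefficient of $6^\alpha$ becomes $3n-6s+1+5A+2B$, matching the $\sum_i\alpha_i$ and $\beta_1+\beta_s$ terms).

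I expect the genuine obstacle to be the second step, in particular the verification of $m_{24}=2A+B$ and $m_{44}=A$, where the local geometry near kinks and short segments must be analysed with care and where the defining restriction of $\Omega_n$ is essential. An internal segment of length $3$ admits two inequivalent placements of its two bounding kinks, and only the hypothesis that no such segment carry an edge between two degree-$3$ vertices guarantees that every internal segment of length at least $3$ contributes identically to the edge-partition. This is exactly what forces the anchor counts to depend only on $n$, $s$, $A$ and $B$ rather than on the finer shape of $B_n$, and hence what makes the uniform formula hold throughout $\Omega_n$.
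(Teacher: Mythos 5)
The paper states Theorem \ref{t1} as a quoted result from \cite{Ali-2016} and contains no proof of its own; the cited source establishes it by exactly the route you take, namely partitioning $E(B_n)$ by end-degree pairs and expressing the class sizes in terms of $n$, $s$, $\sum_{i}\alpha_i$ and $\beta_1+\beta_s$. Your proposal is correct and essentially that same argument: the counts $n_2=s+3$, $n_3=2n-2s$, $n_4=s-1$, the anchors $m_{22}=2$, $m_{44}=\sum_i\alpha_i$, $m_{24}=2\sum_i\alpha_i+\beta_1+\beta_s$ (which do require $n\ge 3$ and the defining restriction of $\Omega_n$, since a U-shaped internal segment of length $3$ would create an extra $(4,4)$-edge together with the forbidden $(3,3)$-edge --- a point you correctly flag), and the handshake-derived values of $m_{23}$, $m_{33}$, $m_{34}$ all reproduce the stated coefficients.
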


Let
$$f(\al)=2\cdot5^\alpha-6^{\alpha+1}+4\cdot 7^\alpha, \ g(\al)= 2\cdot6^\alpha-5^\alpha- 7^\alpha,$$
$$h(\al)=5\cdot 6^\alpha-2\cdot 5^\alpha-4\cdot 7^\alpha+ 8^\alpha.$$
Furthermore, let $\Psi_{\chi_\alpha}(S_{1})=f(\al)+g(\al)\beta_{1}, \ \Psi_{\chi_\alpha}(S_{s})=f(\al)+g(\al)\beta_{s}$ and for $s\geq3$, assume that $\Psi_{\chi_\alpha}(S_{i})=f(\al)+h(\al)\alpha_{i}$ where $2\leq i\leq s-1$. Then
\begin{equation}\label{Eq.11}
\Psi_{\chi_\alpha}(B_{n})=\sum_{i=1}^{s}\Psi_{\chi_\alpha}(S_{i})=f(\al)s+g(\al)(\beta_{1}+\beta_{s})
+h(\al)\sum_{i=1}^{s}\alpha_{i} \ .
\end{equation}
Hence, the formula given in Theorem \ref{t1} can be rewritten as
\begin{equation}\label{Eq.12}
\chi_\alpha(B_{n})=3\cdot 6^\alpha n +(2\cdot 4^\alpha+2\cdot 5^\alpha+6^\alpha-4\cdot 7^\alpha)+\Psi_{\chi_\alpha}(B_{n}).
\end{equation}
The next lemma is a direct consequence of the relation (\ref{Eq.12}).

\begin{lem}\label{L1}
\cite{Ali-2016}
For any polyomino chain $B_{n}$ having $n\geq3$ squares, $\chi_\alpha(B_{n})$ is maximum (respectively minimum) if and only if $\Psi_{\chi_\alpha}(B_{n})$ is maximum (respectively minimum).
\end{lem}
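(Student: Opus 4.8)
The plan is to read the result straight off the additive decomposition in (\ref{Eq.12}). The crucial observation is that the extremum problem is posed over the fixed collection $\Omega_n$, so both $n$ and $\alpha$ are held constant throughout; only the combinatorial data of the segment lengths (equivalently, the quantities $\beta_1,\beta_s$ and $\sum_{i=1}^s\alpha_i$ that enter $\Psi_{\chi_\alpha}$) may change as $B_n$ ranges over $\Omega_n$. Hence the first two summands on the right-hand side of (\ref{Eq.12}), namely $3\cdot 6^\alpha n$ and $2\cdot 4^\alpha+2\cdot 5^\alpha+6^\alpha-4\cdot 7^\alpha$, together form an additive constant $C=C(n,\alpha)$ that is the same for every member of $\Omega_n$.

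First I would fix $n\geq 3$ and $\alpha$ and set $C=3\cdot 6^\alpha n+(2\cdot 4^\alpha+2\cdot 5^\alpha+6^\alpha-4\cdot 7^\alpha)$, so that (\ref{Eq.12}) reads $\chi_\alpha(B_n)=C+\Psi_{\chi_\alpha}(B_n)$ for every $B_n\in\Omega_n$. Then for any two chains $B_n,B_n'\in\Omega_n$ the constant $C$ cancels, giving $\chi_\alpha(B_n)-\chi_\alpha(B_n')=\Psi_{\chi_\alpha}(B_n)-\Psi_{\chi_\alpha}(B_n')$. Consequently $\chi_\alpha(B_n)\geq\chi_\alpha(B_n')$ holds if and only if $\Psi_{\chi_\alpha}(B_n)\geq\Psi_{\chi_\alpha}(B_n')$; in other words, the passage $\Psi_{\chi_\alpha}(B_n)\mapsto\chi_\alpha(B_n)$ is an order-preserving translation by $C$ on the relevant set of values.

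From this order-preserving correspondence the lemma follows at once: a chain $B_n$ attains the maximum (respectively minimum) of $\chi_\alpha$ over $\Omega_n$ exactly when it attains the maximum (respectively minimum) of $\Psi_{\chi_\alpha}$ over $\Omega_n$, since translating every value by the common constant $C$ neither creates nor destroys extremizers. Thus the two optimization problems share precisely the same set of optimal chains, which is what the statement asserts.

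I do not anticipate any genuine obstacle, as the claim is a formal consequence of (\ref{Eq.12}). The only point deserving care is conceptual rather than computational: one must make explicit that the extremum is taken over $\Omega_n$ with $n$ and $\alpha$ fixed, so that the term $C$ really is constant across the competing chains; were $n$ permitted to vary, this reduction would break down. The value of the lemma is strategic, since it lets the remaining analysis discard the unwieldy index $\chi_\alpha$ and instead optimize the much simpler, segment-wise additive functional $\Psi_{\chi_\alpha}(B_n)=f(\alpha)s+g(\alpha)(\beta_1+\beta_s)+h(\alpha)\sum_{i=1}^s\alpha_i$.
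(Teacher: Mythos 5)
Your proposal is correct and coincides with the paper's own (implicit) argument: the paper simply notes that the lemma is a direct consequence of relation (\ref{Eq.12}), since for fixed $n$ and $\alpha$ the terms $3\cdot 6^\alpha n$ and $2\cdot 4^\alpha+2\cdot 5^\alpha+6^\alpha-4\cdot 7^\alpha$ are constant over $\Omega_n$, so $\chi_\alpha$ is an order-preserving translation of $\Psi_{\chi_\alpha}$. You have merely made this translation argument explicit, which is exactly the intended reasoning.
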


Lemma \ref{L1} will play a vital role in proving the main results of the present note.

\begin{lem}\label{t22}
\cite{Ali-2016}
Let $B_{n}\in\Omega_{n}$ be a polyomino with $n\geq3$ squares. If $f(\al)$, $f(\al)+2g(\al)$ and $f(\al)+2h(\al)$ are all negative, then
\[\chi_\alpha(Z_{n})\leq \chi_\alpha(B_{n})\leq \chi_\alpha(L_{n}).\]
Right (respectively left) equality holds if and only if $B_{n}\cong L_{n}$ (respectively $B_{n}\cong Z_{n}$).
\end{lem}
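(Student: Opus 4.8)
The plan is to use Lemma \ref{L1} to replace $\chi_\alpha$ by the additive quantity $\Psi_{\chi_\alpha}$ and then to optimize the latter over $\Omega_n$ directly from the closed form (\ref{Eq.11}). Abbreviating $f,g,h$ for $f(\al),g(\al),h(\al)$ and setting $a=\sum_{i=1}^{s}\alpha_i$ (the number of internal segments of length $2$) and $b=\beta_1+\beta_s$ (the number of external segments of length $2$), relation (\ref{Eq.11}) reads $\Psi_{\chi_\alpha}(B_n)=fs+gb+ha$. Since the linear chain is a single segment of length $n\ge 3$, one gets $\Psi_{\chi_\alpha}(L_n)=f$, while the zigzag chain has $s=n-1$ segments all of length $2$, giving $\Psi_{\chi_\alpha}(Z_n)=(n-1)f+2g+(n-3)h$. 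The whole argument rests on the elementary counting identity $n-1=\sum_{i=1}^{s}(l_i-1)$ linking the number of segments to their lengths.

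For the right-hand inequality I would fix $B_n$ with $s\ge 2$ (the case $s=1$ being exactly $L_n$) and split
\[
\Psi_{\chi_\alpha}(B_n)-f \;=\; \big(f+gb\big)\;+\;\big(f(s-2)+ha\big).
\]
Since $b\in\{0,1,2\}$, the first bracket equals $f$, $f+g$, or $f+2g$; the hypotheses $f<0$ and $f+2g<0$ (the middle value being their average) force $f+gb<0$. For the second bracket I would use $0\le a\le s-2$: if $h\le 0$ it is clearly $\le 0$, and if $h>0$ then $f(s-2)+ha\le (f+h)(s-2)\le 0$, because $f+2h<0$ yields $f+h<0$. Hence $\Psi_{\chi_\alpha}(B_n)<f$ whenever $s\ge 2$, which gives the right inequality together with the equality characterization $B_n\cong L_n$.

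For the left-hand inequality I would separate the length-$2$ segments (each contributing $1$ to $\sum(l_i-1)$) from the longer ones, writing $\delta=\sum_{l_i\ge 3}(l_i-3)\ge 0$ for the total excess length. The counting identity then becomes $n-1=2s-(a+b)+\delta$, i.e.\ $s=\tfrac12\big(n-1+a+b-\delta\big)$. Substituting this into $\Psi_{\chi_\alpha}(B_n)-\Psi_{\chi_\alpha}(Z_n)$ and collecting terms, with $A=a-(n-3)$ and $B=b-2$, I expect the clean identity
\[
\Psi_{\chi_\alpha}(B_n)-\Psi_{\chi_\alpha}(Z_n)\;=\;A\cdot\frac{f+2h}{2}\;+\;B\cdot\frac{f+2g}{2}\;-\;\frac{f}{2}\,\delta .
\]
Here $A\le 0$ and $B\le 0$ (since $a\le s-2\le n-3$ and $b\le 2$), while $\delta\ge 0$; combined with $f+2h<0$, $f+2g<0$ and $f<0$, every summand on the right is nonnegative, giving the left inequality. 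Equality forces $A=B=\delta=0$, which pins down $s=n-1$ with all segments of length $2$, i.e.\ $B_n\cong Z_n$.

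The main obstacle — and the reason the hypotheses are stated through $f+2g$ and $f+2h$ rather than through the signs of $g$ and $h$ alone — is exactly the possibility that $g$ or $h$ be positive; the grouping $f+gb$ and the coefficients $\tfrac12(f+2g)$, $\tfrac12(f+2h)$ are engineered so that these two potentially positive contributions are always dominated by $f$. A secondary technical point to verify is that $A=a-(n-3)\le 0$ for every admissible chain, which follows from $a\le s-2$ together with $s\le n-1$ (itself a consequence of $a+b\ge 2s-n+1$ and $a+b\le s$), and that $A=B=\delta=0$ indeed characterizes $Z_n$ uniquely up to isomorphism among the chains of $\Omega_n$.
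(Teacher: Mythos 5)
Your proof is correct --- I have checked the counting identity $n-1=\sum_{i=1}^{s}(l_i-1)=2s-(a+b)+\delta$, the resulting exact formula
\[
\Psi_{\chi_\alpha}(B_n)-\Psi_{\chi_\alpha}(Z_n)=\bigl(a-(n-3)\bigr)\cdot\tfrac{f(\al)+2h(\al)}{2}+\bigl(b-2\bigr)\cdot\tfrac{f(\al)+2g(\al)}{2}-\tfrac{f(\al)}{2}\,\delta,
\]
and the bounds $a\le s-2\le n-3$, $b\le 2$, all of which hold --- but it takes a genuinely different route from the source. The paper itself gives no proof of this lemma (it is imported from \cite{Ali-2016}), and both that source and the analogous arguments that this paper does spell out (Propositions \ref{p-2} and \ref{p-4}) proceed by local exchange: one assumes an extremal chain $B_n^{*}$ and shows that any offending feature (a long segment, an external segment of length $2$, etc.) admits a modification of the length vector that moves $\Psi_{\chi_\alpha}$ in the forbidden direction, whence the extremal chain is pinned down step by step. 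Your argument is instead global and closed-form: the maximum follows from the grouping $\Psi_{\chi_\alpha}(B_n)-f=(f+gb)+(f(s-2)+ha)$ with the averaging observations $f+g=\tfrac12\bigl(f+(f+2g)\bigr)<0$ and $f+h<0$, and the minimum from the displayed identity, each summand of which is nonnegative under the hypotheses, with equality forcing $a=n-3$, $b=2$, $\delta=0$ and hence $s=n-1$ with all segments of length $2$, i.e.\ $B_n\cong Z_n$ (the uniqueness you flag is standard: two consecutive kinks turning the same way at segment length $2$ would create a $2\times2$ block, which is not a chain, so the turns are forced up to reflection). What each approach buys: the exchange method adapts immediately to the sign-boundary regimes where the extremal chains are no longer $L_n$ and $Z_n$ (exactly what Proposition \ref{p-4} needs), whereas your identity delivers both inequalities and both equality characterizations in one stroke and, as a side benefit, sidesteps the need to verify that each modified chain $B_n^{(k)}$ remains in $\Omega_n$ --- a point the exchange proofs pass over in silence.
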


\begin{prop}\label{p-1}
Let $B_{n}\in\Omega_{n}$ be a polyomino chain having $n\geq3$ squares. Let $x_0\approx -3.09997$ be a root of the equation $f(\al)=0$. Then, for $x_0 < \alpha <0 $, it holds that
$$
\chi_\alpha(Z_{n}) \leq \chi_\alpha(B_{n})\leq \chi_\alpha(L_{n}),
$$
with right (respectively left) equality if and only if $B_{n}\cong L_{n}$ (respectively $B_{n}\cong Z_{n}$).
\end{prop}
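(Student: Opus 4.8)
The plan is to deduce the statement directly from Lemma \ref{t22}: once we know that the three quantities $f(\al)$, $f(\al)+2g(\al)$ and $f(\al)+2h(\al)$ are all negative on the open interval $(x_0,0)$, the chain of inequalities together with its two equality cases is immediate, since Lemma \ref{t22} already packages exactly that conclusion (and $n\ge 3$, $B_n\in\Omega_n$ are shared hypotheses). Thus the whole proof reduces to checking three sign conditions, and no further combinatorial input about polyomino chains is required beyond what Lemma \ref{t22} supplies.

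For the two combined quantities I would first simplify. A short computation gives $f(\al)+2g(\al)=2\bigl(7^{\al}-6^{\al}\bigr)$, which is negative for every $\al<0$ because $t\mapsto t^{\al}$ is strictly decreasing there. For the third quantity one finds $f(\al)+2h(\al)=2\bigl(8^{\al}-2\cdot 7^{\al}+2\cdot 6^{\al}-5^{\al}\bigr)$, and the useful observation is the regrouping
\[
8^{\al}-2\cdot 7^{\al}+2\cdot 6^{\al}-5^{\al}=\bigl(8^{\al}-7^{\al}\bigr)+g(\al).
\]
Here $8^{\al}-7^{\al}<0$ by monotonicity, while $g(\al)=-\bigl(5^{\al}-2\cdot 6^{\al}+7^{\al}\bigr)<0$ because $5^{\al}-2\cdot 6^{\al}+7^{\al}$ is a second difference of the strictly convex function $t\mapsto t^{\al}$ (convex since $\al<0$), hence positive. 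Therefore $f(\al)+2h(\al)<0$ for all $\al<0$, in particular on $(x_0,0)$.

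The remaining condition, $f(\al)<0$, is where the specific value $x_0$ enters and where the interval is pinned down. I would factor out the positive quantity $6^{\al}$ and write $f(\al)=6^{\al}F(\al)$ with $F(\al)=2(5/6)^{\al}+4(7/6)^{\al}-6$. As a function of $\al$, $F$ is a positive combination of the strictly convex exponentials $(5/6)^{\al}$ and $(7/6)^{\al}$, so $F$ is itself strictly convex on $\mathbb{R}$; consequently it has at most two zeros. Since $F(0)=2+4-6=0$ and, by the definition of $x_0$, also $F(x_0)=0$, these are its only zeros, and strict convexity forces $F<0$ on the open interval between them, namely on $(x_0,0)$. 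As $6^{\al}>0$, this yields $f(\al)<0$ precisely there.

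Combining the three sign conditions, all hypotheses of Lemma \ref{t22} hold on $(x_0,0)$, and the proposition (including the characterisations $B_n\cong L_n$ and $B_n\cong Z_n$ for the two equalities) follows at once. I expect the main obstacle to be the sign of $f(\al)+2h(\al)$: the expression $8^{\al}-2\cdot 7^{\al}+2\cdot 6^{\al}-5^{\al}$ is a signed mix of four powers that is neither a single difference nor a single second difference, so a naive convexity-in-$\al$ argument does not settle it; the resolution is the regrouping above, which separates it into a monotonicity term and a second-difference term. A secondary point requiring care is justifying that $x_0$ and $0$ are the \emph{only} sign changes of $f$, which is exactly what the strict convexity of $F$ delivers.
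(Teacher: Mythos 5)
Your proof is correct and takes essentially the same route as the paper: the paper's own proof of Proposition \ref{p-1} consists of the single assertion that $f(\al)$, $f(\al)+2g(\al)$ and $f(\al)+2h(\al)$ are negative for $x_0<\al<0$ (``it can be easily checked'') followed by an appeal to Lemma \ref{t22}, which is exactly your plan. Your added value is simply filling in the verifications the paper omits---the identity $f(\al)+2g(\al)=2\bigl(7^{\al}-6^{\al}\bigr)$, the regrouping of $f(\al)+2h(\al)$ as twice $\bigl(8^{\al}-7^{\al}\bigr)+g(\al)$ with $g(\al)<0$ by strict convexity of $t\mapsto t^{\al}$ for $\al<0$, and the strict-convexity-in-$\al$ argument showing $x_0$ and $0$ are the only zeros of $f$ so that $f<0$ exactly on $(x_0,0)$---all of which check out.
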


\begin{proof}
It can be easily checked that $f(\al)$, $f(\al)+2g(\al)$ and $f(\al)+2h(\al)$ are negative for $x_0<\alpha<0$, and
hence, from Lemma \ref{t22}, the required result follows.
\end{proof}

\begin{prop}\label{p-2}
Let $B_{n}\in\Omega_{n}$ be a polyomino with $n\geq3$ squares. Let $x_0\approx -3.09997$ be a root of the equation $f(\al)=0$. Then, for $\alpha\le x_0$, the following inequality holds
\[\chi_\alpha(B_{n}) \ge \chi_\alpha(Z_{n}),\]
with equality if and only if $B_{n}\cong Z_{n}$.
\end{prop}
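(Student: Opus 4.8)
The plan is to reduce the statement, via Lemma \ref{L1}, to the purely combinatorial claim that $\Psi_{\chi_\alpha}(B_n)\ge \Psi_{\chi_\alpha}(Z_n)$ for every $B_n\in\Omega_n$, with equality only for the zigzag chain. First I would rewrite the sum in (\ref{Eq.11}) by collecting like terms: letting $s$ be the number of segments, $a:=\beta_1+\beta_s$ the number of length-$2$ external segments, and $b:=\sum_{i=1}^{s}\alpha_i$ the number of length-$2$ internal segments, one has $\Psi_{\chi_\alpha}(B_n)=f(\alpha)s+g(\alpha)a+h(\alpha)b$. Since consecutive segments overlap in exactly one (kink) square, $\sum_r l_r=n+s-1$, and bounding each length-$2$ segment below by $2$ and each longer segment below by $3$ gives the single structural constraint $2s\le n-1+a+b$; together with $0\le a\le2$ and $0\le b\le s-2$ this records everything one needs about an admissible chain with $s\ge2$ segments. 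The linear chain $L_n$, the only case with $s=1$, I would treat separately at the end.

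Next I would isolate the three sign facts that drive the estimate: for the relevant $\alpha$ one needs $h(\alpha)<0$, $f(\alpha)+h(\alpha)=8^\alpha-6^\alpha<0$, and $f(\alpha)+g(\alpha)+h(\alpha)=(6^\alpha-5^\alpha)+(8^\alpha-7^\alpha)<0$. The last two are immediate for every $\alpha<0$ since $t\mapsto t^\alpha$ is strictly decreasing, so $8^\alpha<6^\alpha$, $6^\alpha<5^\alpha$, and $8^\alpha<7^\alpha$. Only the negativity of $h(\alpha)$ on $(-\infty,x_0]$ is a genuinely analytic point, and verifying it (for instance by dividing by $5^\alpha$ and studying the resulting exponential sum, noting $h(0)=0$ and $h(\alpha)\to 0^-$ as $\alpha\to-\infty$) is where I expect the real work to lie.

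With these signs available the main inequality is a short chain. Using $h(\alpha)<0$ and $b\le s-2$ gives $\Psi_{\chi_\alpha}(B_n)\ge (f(\alpha)+h(\alpha))s+g(\alpha)a-2h(\alpha)$. Feeding $b\le s-2$ into the constraint yields $s\le n-3+a$, and because $f(\alpha)+h(\alpha)<0$ this may be substituted to obtain $\Psi_{\chi_\alpha}(B_n)\ge (f(\alpha)+h(\alpha))(n-3)+(f(\alpha)+g(\alpha)+h(\alpha))a-2h(\alpha)$. Finally $f(\alpha)+g(\alpha)+h(\alpha)<0$ with $a\le2$ gives a lower bound that simplifies exactly to $f(\alpha)(n-1)+2g(\alpha)+h(\alpha)(n-3)=\Psi_{\chi_\alpha}(Z_n)$. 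Tracking the equality cases forces $b=s-2$, $s=n-3+a$, and $a=2$, hence $s=n-1$ with every segment of length $2$; since $\chi_\alpha$ depends only on the length vector, this is precisely $B_n\cong Z_n$.

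The routine loose ends I would then tidy up are: the excluded case $s=1$, where $\Psi_{\chi_\alpha}(L_n)=f(\alpha)$ and $\Psi_{\chi_\alpha}(Z_n)\le\Psi_{\chi_\alpha}(L_n)$ reduces to $(n-3)(f(\alpha)+h(\alpha))+2(7^\alpha-6^\alpha)\le 0$, which holds as both summands are $\le 0$; the small-$n$ degeneracies in which $s=n-1$ is forced; and the remark that passing to $\Omega_n$ merely discards chains, so it cannot lower the minimum, while $Z_n\in\Omega_n$. I would note in passing that this lower bound in fact persists throughout $\alpha<0$, the restriction to $\alpha\le x_0$ being made only because $(x_0,0)$ is already handled by Proposition \ref{p-1}. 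The main obstacle is thus not the optimization, which is linear in $(s,a,b)$ and resolved entirely by the three sign facts, but confirming $h(\alpha)<0$ on the whole ray $(-\infty,x_0]$.
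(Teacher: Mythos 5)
Your proposal is correct, but it takes a genuinely different route from the paper. The paper argues by local exchange on a hypothetical minimizer $B_n^{*}$: it first rules out $s=1$ by checking $\Psi_{\chi_\alpha}(Z_n)<\Psi_{\chi_\alpha}(L_n)$, then shows that any external segment of length $\geq 3$ can be replaced by a run of length-$2$ segments and any internal segment of length $\geq 3$ can be split, in each case producing a chain in $\Omega_n$ with strictly smaller $\Psi_{\chi_\alpha}$ (the relevant differences being $g(\alpha)+(l_1-2)\left(f(\alpha)+h(\alpha)\right)\leq f(\alpha)+g(\alpha)+h(\alpha)<0$ and $f(\alpha)+(1+y)h(\alpha)<0$), forcing $B_n^{*}\cong Z_n$. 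You instead relax to a linear optimization in $(s,a,b)$ using the bookkeeping identity $\sum_r l_r=n+s-1$ and the constraints $2s\le n-1+a+b$, $a\le 2$, $b\le s-2$, and your three-step substitution chain is sound: the algebra checks out, including $f(\alpha)+h(\alpha)=8^\alpha-6^\alpha$, $f(\alpha)+g(\alpha)+h(\alpha)=(6^\alpha-5^\alpha)+(8^\alpha-7^\alpha)$, $f(\alpha)+2g(\alpha)=2(7^\alpha-6^\alpha)$ for the separately treated $s=1$ case, the identification of the final bound with $\Psi_{\chi_\alpha}(Z_n)=(n-1)f(\alpha)+2g(\alpha)+(n-3)h(\alpha)$, and the equality tracking ($a=2$, $b=s-2$, $s=n-1$, hence every segment of length exactly $2$, which is the zigzag chain by definition). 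What your route buys: it uses only the signs of $h$, $f+h$ and $f+g+h$, the latter two immediate from monotonicity of $t\mapsto t^{\alpha}$, so (as you observe) it proves minimality of $Z_n$ uniformly on all of $\alpha<0$, subsuming the lower bound of Proposition \ref{p-1} and bypassing the hypothesis-checking of Lemma \ref{t22}, and it avoids constructing modified chains and verifying their membership in $\Omega_n$. What the paper's route buys: the exchange template is reused almost verbatim in Proposition \ref{p-4}, and it needs no length-vector accounting. Two touch-ups for a final write-up: the negativity of $h(\alpha)$ on $(-\infty,x_0]$ (indeed on all of $(-\infty,0)$), which you rightly flag as the one analytic point, is merely asserted in the paper as well, so you are at parity, but you should actually carry out the verification you sketch; and in the $s=1$ comparison you should note that $2(7^\alpha-6^\alpha)<0$ strictly, giving $\Psi_{\chi_\alpha}(Z_n)<\Psi_{\chi_\alpha}(L_n)$, which the ``equality if and only if $B_n\cong Z_n$'' claim requires.
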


\begin{proof}
We note that $f(\al)$ is non-negative and both $g(\al)$, $h(\al)$ are negative for $\alpha\le x_0 \approx -3.09997$.
Suppose that the polyomino chain $B_{n}^{*}\in\Omega_{n}$ has the minimum $\Psi_{\chi_\alpha}$ value for $\alpha\le x_0$. Further suppose that $S_{1}, S_{2},...,S_{s}$ be the segments of $B_{n}^{*}$ with the length vector $(l_{1},l_{2},...,l_{s})$. It holds that
$$\Psi_{\chi_\alpha}(Z_{n}) = 2f(\al) + 2g(\al) + (n-3)(f(\al) + h(\al)) \le 2f(\al) + 2g(\al) < f(\al) = \Psi_{\chi_\alpha}(L_{n}),$$
which implies that $s\ge2$.

If at least one of external segments of $B_{n}^{*}$ has length greater than 2. Without loss of generality, assume that $l_{1}\geq3$. Then, there exist a polyomino chain $B_{n}^{(1)}\in\Omega_{n}$ having length vector $(\underbrace{2,2,\cdots,2}_{(l_1-1)-times},l_{2},...,l_{s})$ and
\[\Psi_{\chi_\alpha}(B_{n}^{(1)})-\Psi_{\chi_\alpha}(B_{n}^{*})= g(\al) + \left(l_1 -2\right)(f(\al) + h(\al))\le f(\al) + g(\al) + h(\al)<0,\]
for $\alpha\le x_0 \approx -3.09997$, which is a contradiction to the definition of $B_{n}^{*}$. Hence both external segments of $B_{n}^{*}$ must have length 2.

If some internal segment of $B_{n}^{*}$ has length greater than 2, say $l_{j}\geq3$ where $2\leq j\leq s-1$ and $s\geq3$. Then, there exists a polyomino chain $B_{n}^{(2)}\in\Omega_{n}$ having length vector $(l_{1},l_{2},...,l_{j-1},2,l_{j}-1,...,l_{s})$ and
\[\Psi_{\chi_\alpha}(B_{n}^{(2)})-\Psi_{\chi_\alpha}(B_{n}^{*})= f(\al) + (1+y)h(\al)<0, \ \ \ (\text{where $y=0$ or 1})\]
for $\alpha\le x_0 \approx -3.09997$, which is again a contradiction. Hence, every internal segment of $B_{n}^{*}$ has length 2.

\noindent
Therefore, $B_{n}^{*}\cong Z_{n}$ and from Lemma \ref{L1}, the desired result follows.
\end{proof}

\begin{prop}\label{p-3}
Let $B_{n}\in\Omega_{n}$ be a polyomino with $n\geq3$ squares. Let $\alpha\approx -3.09997$ be a root of the equation $f(\al)=0$. Then, the following inequality holds
\[\chi_\alpha(B_{n}) \le 3\cdot 6^\alpha n +(2\cdot 4^\alpha+2\cdot 5^\alpha+6^\alpha-4\cdot 7^\alpha),\]
with equality if and only if $B_{n}$ does not contain any segment of length 2.
\end{prop}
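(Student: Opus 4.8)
The plan is to reduce the assertion to a statement about $\Psi_{\chi_\alpha}(B_n)$ by means of the rewritten formula (\ref{Eq.12}). The right-hand side of the claimed inequality is exactly the constant-plus-linear part $3\cdot 6^\alpha n +(2\cdot 4^\alpha+2\cdot 5^\alpha+6^\alpha-4\cdot 7^\alpha)$ appearing in (\ref{Eq.12}). Hence the inequality $\chi_\alpha(B_n)\le 3\cdot 6^\alpha n +(2\cdot 4^\alpha+2\cdot 5^\alpha+6^\alpha-4\cdot 7^\alpha)$ is equivalent to $\Psi_{\chi_\alpha}(B_n)\le 0$, and equality in one is equivalent to equality in the other. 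So the entire task becomes: at $\alpha\approx -3.09997$, show $\Psi_{\chi_\alpha}(B_n)\le 0$ with equality precisely when $B_n$ has no segment of length $2$.

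First I would exploit that $\alpha$ is here chosen to be the root of $f(\al)=0$, so $f(\al)=0$. Substituting this into (\ref{Eq.11}) annihilates the leading term and leaves
$$\Psi_{\chi_\alpha}(B_n)=g(\al)(\beta_1+\beta_s)+h(\al)\sum_{i=1}^{s}\alpha_i.$$
Next I would invoke the sign information already recorded in the proof of Proposition \ref{p-2}, namely that both $g(\al)$ and $h(\al)$ are negative for every $\alpha\le x_0$, and in particular at the endpoint $\alpha=x_0$ itself. Since each of $\beta_1,\beta_s,\alpha_2,\dots,\alpha_{s-1}$ is an indicator taking values in $\{0,1\}$ (with $\alpha_1=\alpha_s=0$), every summand on the right is a product of a negative number and a nonnegative integer, whence $\Psi_{\chi_\alpha}(B_n)\le 0$. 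Feeding this back through (\ref{Eq.12}) yields the asserted bound.

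For the equality case I would read off, from the same display, that $\Psi_{\chi_\alpha}(B_n)=0$ if and only if $\beta_1=\beta_s=0$ and $\alpha_i=0$ for all $2\le i\le s-1$, because $g(\al)$ and $h(\al)$ are strictly negative. By the Definition preceding Theorem \ref{t1}, $\beta_j=1$ exactly when $l_j=2$ and $\alpha_i=1$ exactly when $l_i=2$; therefore all these indicators vanish precisely when neither an external nor an internal segment has length $2$, i.e.\ when $B_n$ contains no segment of length $2$. Together with the equivalence furnished by (\ref{Eq.12}), this gives the stated characterization of equality.

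There is essentially no hard computation involved: this proposition is the boundary (limiting) case of Proposition \ref{p-2} at the point $\alpha=x_0$ where $f$ changes sign, and the conceptual difference is that, with $f(\al)=0$, the segment count $s$ drops out of $\Psi_{\chi_\alpha}$ entirely, so only the length-$2$ penalties survive. The one point needing care is to confirm that the sign estimates $g(\al)<0$ and $h(\al)<0$ remain valid at the endpoint $\alpha=x_0$ and not merely on the open interval, which is guaranteed since Proposition \ref{p-2} established them over the closed range $\alpha\le x_0$. I would also note in passing the trivial small-$s$ bookkeeping (for instance $s=1$, the linear chain, where $l_1=n\ge 3$ forces $\beta_1=0$ and hence $\Psi_{\chi_\alpha}=0$), which is consistent with the equality condition.
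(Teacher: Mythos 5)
Your proposal is correct and follows essentially the same route as the paper: setting $f(\al)=0$ in Equation (\ref{Eq.11}) reduces $\Psi_{\chi_\alpha}(B_n)$ to $g(\al)(\beta_1+\beta_s)+h(\al)\sum_{i=1}^{s}\alpha_i\le 0$ with equality precisely when no segment has length $2$, and the conclusion follows via Equation (\ref{Eq.12}) and Lemma \ref{L1}. Your write-up is in fact slightly more careful than the paper's, since you explicitly justify that the sign conditions $g(\al)<0$ and $h(\al)<0$ hold at the endpoint $\alpha=x_0$ (a fact the paper uses tacitly).
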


\begin{proof}
From Equation (\ref{Eq.11}), it follows that
\begin{equation*}
\Psi_{\chi_\alpha}(B_{n})=g(\al)(\beta_{1}+\beta_{s})+h(\al)\sum_{i=1}^{s}\alpha_{i} \le 0 .
\end{equation*}
Clearly, the equality $\Psi_{\chi_\alpha}(B_{n})=0$ holds if and only if $B_{n}$ does not contain any segment of length 2. Hence, by using Lemma \ref{L1}, we have the required result.
\end{proof}

Let $\mathcal{Z}_{n}^*$ be a subclass of $\Omega_{n}$ consisting of those polyomino chains which do not contain any segment of length equal to 2 or greater than 4, and contain at most one segment of length 4. Let $\mathcal{Z}_{n}$ be a subclass of $\Omega_{n}$ consisting of those polyomino chains in which every internal segment (if exists) has length 3 or 4, every external segment has length at most 4, at most one external segment has length 2, at most one segment has length 4 and if some internal segment has length 4 then both the external segments have length 3. Let $Z_{n}^\dag\in\Omega_{n}$ be the polyomino chain in which every internal segment (if exists) has length 3, every external segment has length at most 3 and at most one external segment has length 2.

\begin{prop}\label{p-4}
Let $B_{n}\in\Omega_{n}$ be a polyomino with $n\geq3$ squares. Let $x_0 \approx -3.09997$ and $x_1 \approx -5.46343$ be the roots of the equations $f(\al)=0$ and $f(\al)+g(\al)=0$, respectively. Then, for $x_1 < \alpha < x_0$, the following inequality holds
\begin{equation}\label{Eq-Z1}
\chi_\alpha(B_{n}) \le \chi_\alpha\left( Z_{n}^* \right),
\end{equation}
with equality if and only if $B_n \cong Z_{n}^*\in \mathcal{Z}_{n}^*$. Also, for $\alpha = x_1$, the following inequality holds
\begin{equation}\label{Eq-Z3}
\chi_\alpha(B_{n}) \le \chi_\alpha\left( Z_{n}^\maltese \right),
\end{equation}
with equality if and only if $B_n \cong Z_{n}^\maltese \in \mathcal{Z}_{n}$.  Furthermore, for $\alpha < x_1$, the following inequality holds
\begin{equation}\label{Eq-Z2}
\chi_\alpha(B_{n}) \le \chi_\alpha\left( Z_{n}^\dag \right),
\end{equation}
with equality if and only if $B_n \cong Z_{n}^\dag$.
\end{prop}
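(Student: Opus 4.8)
The plan is to reduce everything, via Lemma~\ref{L1}, to \emph{maximizing}
\[
\Psi_{\chi_\alpha}(B_{n})=f(\al)s+g(\al)(\beta_{1}+\beta_{s})+h(\al)\sum_{i=1}^{s}\alpha_{i}
\]
over $\Omega_{n}$. Since consecutive segments of a polyomino chain overlap in exactly one kink square, the number of squares satisfies $n-1=\sum_{i=1}^{s}(l_i-1)$; thus, for the purpose of computing $\Psi_{\chi_\alpha}$, a chain is determined by a sequence of segment lengths $l_i\ge 2$ with this fixed total, together with the fact that (when $s\ge 2$) exactly two segments $S_1,S_s$ are external. I would read $\Psi_{\chi_\alpha}$ as a sum of per-segment contributions carrying a ``cost'' $l_i-1$: a segment of length $\ge 3$ contributes $f(\al)$, a length-$2$ external segment contributes $f(\al)+g(\al)$, and a length-$2$ internal segment contributes $f(\al)+h(\al)$.

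The next step is the sign analysis on which the whole argument turns. For $\alpha<x_0$ one has $f(\al)>0$, while $g(\al)<0$ and $h(\al)<0$ (exactly as in the proof of Proposition~\ref{p-2}); in addition $f(\al)+h(\al)=8^{\alpha}-6^{\alpha}<0$ for every $\alpha<0$. The decisive quantity is
\[
f(\al)+g(\al)=5^{\alpha}-4\cdot 6^{\alpha}+3\cdot 7^{\alpha},
\]
which vanishes at $x_1$, is negative on $(x_1,x_0)$, and is positive on $(-\infty,x_1)$; this last fact I would establish by dividing through by $6^{\alpha}$ and analysing the monotonicity of $(5/6)^{\alpha}-4+3\,(7/6)^{\alpha}$, confirming that $x_1$ is its only root in $(-\infty,0)$.

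With these signs in hand I would run an exchange argument on a maximizer $B_{n}^{*}$, in the style of Proposition~\ref{p-2}. Because $f(\al)+h(\al)<0$, any internal length-$2$ segment has negative value-to-cost ratio and can be removed by a length-redistribution move that strictly raises $\Psi_{\chi_\alpha}$, so $B_{n}^{*}$ has no internal length-$2$ segment. Because $f(\al)>0$ and length-$3$ segments have the largest positive ratio $f(\al)/2$, no segment has length $\ge 5$, and two length-$4$ segments may always be traded for three length-$3$ segments (same total cost, one extra segment, net gain $f(\al)>0$); hence $B_{n}^{*}$ has at most one length-$4$ segment. The only remaining freedom, forced by the parity of $n-1$, is how to supply a single odd-cost segment when $n$ is even. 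The crux is this parity choice: replacing the unique length-$4$ segment by an external length-$2$ segment and redistributing to keep all other segments of length $3$ changes $\Psi_{\chi_\alpha}$ by exactly $f(\al)+g(\al)$. Consequently, on $(x_1,x_0)$ the length-$4$ fix wins and no length-$2$ segment survives, so the maximizers are exactly the members of $\mathcal{Z}_{n}^{*}$; on $(-\infty,x_1)$ the external length-$2$ fix strictly wins (and internal length-$4$'s are likewise dominated), forcing every internal segment to length $3$ and at most one external segment to length $2$, i.e.\ $B_{n}^{*}\cong Z_{n}^{\dag}$; and at $\alpha=x_1$ the two fixes tie, so the maximizers are precisely the chains of $\mathcal{Z}_{n}$ realizing either fix. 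For odd $n$ no parity adjustment is needed, all three prescriptions collapse to the single all-length-$3$ chain, and the three subclasses coincide.

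I expect the main obstacle to be the exchange argument itself: designing the local moves so that each one \emph{strictly} increases $\Psi_{\chi_\alpha}$ while correctly tracking the external/internal status of the affected segments and respecting $\sum_i(l_i-1)=n-1$. The secondary difficulty is the boundary value $\alpha=x_1$: since $f(\al)+g(\al)=0$ produces ties, one must characterize the \emph{entire} family $\mathcal{Z}_{n}$ of maximizers---proving both that every member attains the bound and that no chain outside does---rather than pinning down a single extremal chain.
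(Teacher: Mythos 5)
Your skeleton coincides with the paper's own proof: reduce to $\Psi_{\chi_\alpha}$ via Lemma \ref{L1}, establish $f(\alpha)>0$, $g(\alpha)<0$, $h(\alpha)<0$ for $\alpha<x_0$, and run local exchanges on the length vector. Your ``two length-$4$ segments trade for three length-$3$ segments'' is exactly the paper's $B_n^{(3)}$ move (gain $f(\alpha)$), your merge of internal length-$2$ segments is its $B_n^{(1)}$ move, and your parity comparison worth exactly $f(\alpha)+g(\alpha)$ is its $B_n^{(6)}$ exchange with difference $-f(\alpha)-g(\alpha)$; your closed forms $f(\alpha)+h(\alpha)=8^{\alpha}-6^{\alpha}<0$ and $f(\alpha)+g(\alpha)=5^{\alpha}-4\cdot 6^{\alpha}+3\cdot 7^{\alpha}$ are correct and in fact cleaner than the paper's ``easily checked'' assertions. (One small caution: $(5/6)^{\alpha}-4+3(7/6)^{\alpha}$ is \emph{not} monotone on $(-\infty,0)$ --- it decreases and then increases, vanishing again at $\alpha=0$ --- so uniqueness of the root $x_1$ in $(-\infty,0)$ needs this unimodality, not monotonicity.)

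There is, however, a genuine gap at your final step. You assert that after eliminating internal $2$'s, lengths $\ge 5$, and double $4$'s, ``the only remaining freedom, forced by the parity of $n-1$, is how to supply a single odd-cost segment when $n$ is even.'' That does not follow from the moves you listed: parity only forces the \emph{number} of odd-cost segments (lengths $2$ and $4$) to have fixed parity, so configurations with several defects of even total cost survive all your exchanges --- most importantly \emph{both} external segments of length $2$ (total defect cost $1+1$, available for odd $n$), and the combination of one external length-$2$ with one length-$4$ segment. The paper eliminates the first with a dedicated exchange ($B_n^{(4)}$: absorb one external $2$, with difference $f(\alpha)+2g(\alpha)<0$), and its ordering of eliminations disposes of the second. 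In your cost framework the repairs are routine: $f(\alpha)+2g(\alpha)=2\bigl(7^{\alpha}-6^{\alpha}\bigr)<0$ for all $\alpha<0$ kills the double $2$-external, and the $\{2\text{-external},4\}$ combination loses to the all-$3$ chain by exactly $g(\alpha)<0$. Without these comparisons your ``maximizers are exactly \dots'' conclusions do not follow in any of the three regimes, and in particular the complete determination of the tie class at $\alpha=x_1$ (where $f(\alpha)+g(\alpha)=0$ equates only the \emph{single}-defect chains, while multi-defect chains still lose strictly) remains open in your sketch --- precisely the part you flagged as the ``secondary difficulty'' but did not resolve.
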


\begin{proof}
For $n=3$, the result is obvious. We assume that $n\ge4$. It can be easily checked that $f(\al)$ is positive and both $g(\al)$, $h(\al)$ are negative for $x_1 < \alpha < x_0$.
Suppose that for the polyomino chain $B_{n}^{*}\in\Omega_{n}$, $\Psi_{\chi_\alpha}(B_{n}^{*})$ is maximum for $\alpha < x_0$. Let $B_{n}^{*}$ has $s$ segments $S_{1}, S_{2},...,S_{s}$ with the length vector $(l_{1},l_{2},...,l_{s})$.

If $s\ge 3$ and at least one of internal segments of $B_{n}^{*}$ has length 2, say $l_{i}=2$ for $2\le i \le s-1$, then there exists a polyomino chain $B_{n}^{(1)}\in\Omega_{n}$ having length vector
\[
\begin{cases}
(l_1,l_{2},..., l_{s-1} + l_{s}-1)  & \text{ if } i=s-1, \\
(l_1,l_{2},..., l_{i-1}, l_i + l_{i+1}-1, l_{i+2},...,l_{s})  & \text{ otherwise},
\end{cases}
\]
and
\[\Psi_{\chi_\alpha}(B_{n}^{*})-\Psi_{\chi_\alpha}(B_{n}^{(1)})=
\begin{cases}
f(\al) + x\cdot g(\al) + h(\al)<0  & \text{ if } i=s-1, \\
f(\al) + (1+ y)h(\al)<0 & \text{ otherwise},
\end{cases}
\]
for $\alpha < x_0$, where $x,y\in \{0,1 \}$. This is a contradiction. Hence, every internal segment (if exists) of $B_{n}^{*}$ has length greater than 2.

If at least one of segments of $B_{n}^{*}$ has length greater than 4, say $l_{i}\ge5$ for $1\le i \le s$, then there exists a polyomino chain $B_{n}^{(2)}\in\Omega_{n}$ having length vector
\[
\begin{cases}
(3,l_1-2,l_{2},l_3,...,l_{s})  & \text{ if } i=1, \\
(l_1,l_{2},..., l_{i-1}, 3, l_i - 2, l_{i+1}, l_{i+2},...,l_{s})  & \text{ if } 2\le i \le s-1,\\
(l_1,l_{2},..., l_{s-1}, 3, l_s - 2)  & \text{ if } i = s,
\end{cases}
\]
and
\[\Psi_{\chi_\alpha}(B_{n}^{*})-\Psi_{\chi_\alpha}(B_{n}^{(2)})= -f(\al) <0,\]
a contradiction. Hence, every segment of $B_{n}^{*}$ has length less than than 5.

If at least two segments of $B_{n}^{*}$ have length 4, say $l_i=l_j=4$ for $1\le i,j \le s$, then there exists a polyomino chain $B_{n}^{(3)}\in\Omega_{n}$ having length vector
$(3,l_1,l_{2},..., l_{i-1}, l_{i}-1, l_{i+1},...,l_{j-1}, l_{j}-1, l_{j+1},...,l_{s})$ and
\[\Psi_{\chi_\alpha}(B_{n}^{*})-\Psi_{\chi_\alpha}(B_{n}^{(3)})= -f(\al) <0,\]
a contradiction. Hence, $B_{n}^{*}$ contains at most one segment of length 4.

If both the external segments of $B_{n}^{*}$ have length 2, then ($s\ge3$ because $n\ge4$) there exists a polyomino chain $B_{n}^{(4)}\in\Omega_{n}$ having length vector
$(l_1+1,l_{2},l_3,...,l_{s-1})$ and
\[\Psi_{\chi_\alpha}(B_{n}^{*})-\Psi_{\chi_\alpha}(B_{n}^{(4)})= f(\al) + 2g(\al)<0,\]
which is again a contradiction. Hence, at most one external segment has length 2. In what follows, without loss of generality, we assume that $l_s=2$ whenever some external segment has length 2.

If some external segment of $B_{n}^{*}$ has length greater 2, say $l_{1}=2$, then there exists a polyomino chain $B_{n}^{(5)}\in\Omega_{n}$ having length vector $(l_{2}+1, l_{3}, l_{4},...,l_{s})$ and
\[\Psi_{\chi_\alpha}(B_{n}^{*})-\Psi_{\chi_\alpha}(B_{n}^{(5)})= f(\al) + g(\al)<0, \ \text{ (because $l_2\ge3$)}\]
for $x_1 < \alpha < x_0$, which is again a contradiction. Hence, if $\Psi_{\chi_\alpha}(B_{n}^{*})$ is maximum for $x_1<\alpha < x_0$ then every external segment of $B_{n}^{*}$ has length greater than 2. Therefore, if $\Psi_{\chi_\alpha}(B_{n}^{*})$ is maximum for $x_1<\alpha < x_0$ then $B_{n}^{*}\cong Z_{n}^*$ and thence from Lemma \ref{L1}, inequality (\ref{Eq-Z1}) follows.

In the remaining proof, we assume $\alpha \le x_1$.

If $B_{n}^{*}$ contains a segment of length 4, say $l_{i}=4$ for $1\le i \le s$, then there exists a polyomino chain $B_{n}^{(6)}\in\Omega_{n}$ having length vector
\[
\begin{cases}
(2,l_1-1,l_{2},l_3,...,l_{s})  & \text{ if } i=1, \\
(l_1,l_{2},..., l_{i-1}, l_i -1 , l_{i+1}, l_{i+2},...,l_{s}+1)  & \text{ if } 2\le i \le s-1 \text{ and } l_s=2,\\
(l_1,l_{2},..., l_{i-1}, l_i -1 , l_{i+1}, l_{i+2},...,l_{s},2)  & \text{ if } 2\le i \le s-1 \text{ and } l_s=3,\\
(l_1,l_{2},..., l_{s-1}, l_s - 1,2)  & \text{ if } i = s,
\end{cases}
\]
and
\[\Psi_{\chi_\alpha}(B_{n}^{*})-\Psi_{\chi_\alpha}(B_{n}^{(6)})=
\begin{cases}
g(\al)  & \text{ if } 2\le i \le s-1 \text{ and } l_s=2,\\
-f(\al) - g(\al)  & \text{ otherwise}.
\end{cases}
\]
This last equation together with the fact that for $\alpha < x_1$, both $g(\al)$ and $-f(\al) - g(\al)$ are negative, gives a contradiction. The same equation together with the fact that for $\alpha = x_1$, only $g(\al)$ is negative, arises also a contradiction if $2\le i \le s-1$ and  $l_s=2$. Therefore, if $\Psi_{\chi_\alpha}(B_{n}^{*})$ is maximum for $\alpha < x_1$ then $B_{n}^{*}\cong Z_{n}^\dag$ and if $\Psi_{\chi_\alpha}(B_{n}^{*})$ is maximum for $\alpha = x_1$ then $B_{n}^{*} \in \mathcal{Z}_{n}$, and thence from Lemma \ref{L1}, inequalities (\ref{Eq-Z3}) and (\ref{Eq-Z2}) follow.
\end{proof}

Propositions \ref{p-1}, \ref{p-2}, \ref{p-3} and \ref{p-4}, together with the already reported results in \cite{z,AA2,y2,deng, An18,Ali-2016}, yield Table \ref{table-1} which gives information about the polyomino chains having extremum $\chi_\alpha$ values in the collection $\Omega_{n}$ for $n\ge3$.


\begin{table}[ht]
\begin{center}
\footnotesize
  \begin{tabular}{ | c | c | c | }\hline

                  & Polyomino Chain(s) with Maximal $\chi_\alpha$ Value &  Polyomino Chain(s) with Minimal $\chi_\alpha$ Value \\ \hline
$\alpha >0 $      & $Z_n$                                               &   $L_n$                                              \\ \hline
$x_0 <\alpha <0$  & $L_n$                                               &   $Z_n$                                               \\ \hline
$\alpha = x_0$    & chains having no segment of length 2                &   $Z_n$                                               \\ \hline
$x_1 <\alpha<x_0$ & members of $\mathcal{Z}_{n}^*$                      &   $Z_n$                                               \\ \hline
$\alpha= x_1$     & members of $\mathcal{Z}_{n}$                        &   $Z_n$                                                \\ \hline
$\alpha < x_1$    & $Z_{n}^\dag$                                        &   $Z_n$                                                \\ \hline

\end{tabular} \vspace{-6mm}
\end{center}
\caption{Polyomino chains having extremum $\chi_\alpha$ values in the collection $\Omega_{n}$ for $n\ge3$.}\label{table-1}
\end{table}


\begin{thebibliography}{999}
\setlength{\itemsep}{0pt}


\bibitem{An2} M. An, L. Xiong, Extremal polyomino chains with respect to general Randi\'{c} index, \textit{J. Comb. Optim.} \textbf{31} (2016) 635-647.

\bibitem{An18} M. An, L. Xiong, Extremal polyomino chains with respect to general sum-connectivity index, \textit{Ars Combinatoria} {\bf 131} (2017) 255--271.


\bibitem{Akhter17} S. Akhter, M. Imran, Z. Raza, Bounds for the general sum-connectivity index of composite graphs, \textit{J. Inequal. Appl.} \textbf{2017} (2017) 76.

\bibitem{Ali17} A. Ali, An alternative but short proof of a result of Zhu and Lu concerning general sum-connectivity index,
\textit{Asian-European J. Math.} \textbf{11} (2018) DOI: 10.1142/S1793557118500304,
in press.

\bibitem{AA2} A. Ali, A. A. Bhatti, Z. Raza, A note on the zeroth-order general Randi\'{c} index of cacti and polyomino chains, \textit{Iranian J. Math. Chem.} \textbf{5} (2014) 143-152.

\bibitem{y2} A. Ali, A. A. Bhatti, Z. Raza, Some vertex-degree-based topological indices of polyomino chains, \textit{J. Comput. Theor. Nanosci.} \textbf{12} (2015) 2101-2107.

\bibitem{Ali-18} A. Ali, D. Dimitrov, On the extremal graphs with respect to bond incident degree indices, \textit{Discrete Appl. Math.} \textbf{238} (2018) 32--40.


\bibitem{Ali17R} A. Ali, Z. Du, On the difference between atom-bond connectivity index and Randi\'{c} index of binary and chemical trees, \textit{Int. J. Quantum Chem.} \textbf{117} (2017) e25446.


\bibitem{Ali18}	A. Ali, I. Gutman, E. Milovanovi\'{c}, I. Milovanovi\'{c}, Sum of powers of the degrees of graphs: extremal results and bounds, \textit{MATCH Commun. Math. Comput. Chem.} \textbf{80} (2018) 5--84.

\bibitem{Ali-2016} A. Ali, Z. Raza, A. A. Bhatti, Bond incident degree (BID) indices of polyomino chains: A unified approach, \textit{Appl. Math. Comput.} \textbf{287-288} (2016) 28--37.

\bibitem{Arshad17} M. Arshad, I. Tomescu, Maximum general sum-connectivity index with $-1\le \al <0$ for bicyclic graphs, \textit{Math. Reports} \textbf{19}  (2017) 93--96.


\bibitem{Bondy08} J. A. Bondy, U. S. R. Murty, {\it Graph Theory},
        Springer, London, 2008.

\bibitem{Borovicanin-17} B. Borovi\'canin, K. C. Das, B. Furtula, I. Gutman,
        Bounds for Zagreb indices, {\it MATCH Commun. Math. Comput. Chem.\/}
        {\bf 78} (2017) 17--100.
\bibitem{Borovicanin-MCM19} B. Borovi\'canin, K. C. Das, B. Furtula, I. Gutman,
        Zagreb indices: Bounds and extremal graphs, in: I. Gutman, B. Furtula,
        K. C. Das, E. Milovanovi\'c, I. Milovanovi\'c (Eds.),
        {\it Bounds in Chemical Graph Theory -- Basics\/}, Univ. Kragujevac,
        Kragujevac, 2017, pp. 67--153.

\bibitem{Cui-17} Q. Cui, L. Zhong, On the general sum-connectivity index of trees with given number of pendent vertices, \textit{Discrete Appl. Math.} \textbf{222} (2017) 213--221.

\bibitem{Cui17} Q. Cui, L. Zhong, The general Randi\'{c} index of trees with given number of pendent vertices, \textit{Appl. Math. Comput.} \textbf{302} (2017) 111--121.

\bibitem{Das-17} K. C. Das, S. Balachandran, I. Gutman, Inverse degree, Randi\'{c} index and harmonic index of graphs, \textit{Appl. Anal. Discr. Math.} \textbf{11} (2017) 304--313.

\bibitem{deng} H. Deng, S. Balachandran, S. K. Ayyaswamy, Y. B. Venkatakrishnan, The harmonic indices of polyomino chains, \textit{Natl. Acad. Sci. Lett.} \textbf{37} (2014) 451-455.

\bibitem{Fajtlowicz87} S. Fajtlowicz, On conjectures of Graffiti-II,\textit{ Congr. Numer.} \textbf{60} (1987) 187--197.

\bibitem{Gutman-18} I. Gutman, B. Furtula, V. Katani\'{c}, Randi\'{c} index and information, \textit{AKCE Int. J. Graphs Comb.} DOI:10.1016/j.akcej.2017.09.006, in press.


\bibitem{Gutman72} I. Gutman, N. Trinajsti\'{c}, Graph theory and molecular orbitals. Total $\pi$-electron energy of alternant hydrocarbons, \textit{Chem. Phys. Lett.} \textbf{17} (1972) 535--538.

\bibitem{Gutman-99} I. Gutman, S. Klavzar, A. Rajapakse, Average distances in square--cell configurations. \textit{Int. J. Quant. Chem.} \textbf{76} (1999) 611--617.

\bibitem{Harary69} F. Harary, {\it Graph Theory\/}, Addison--Wesley, Reading, 1969.

\bibitem{Harary-97} F. Harary, P.G. Mezey, Cell--shedding transformations, equivalence relations, and similarity measures for square--cell configurations. \textit{Int. J. Quant. Chem.} \textbf{62} (1997) 353--361.

\bibitem{Jamil17} M. K. Jamil, I. Tomescu, Minimum general sum-connectivity index of trees and unicyclic graphs having a given matching number, \textit{Discrete Appl. Math.} \textbf{222} (2017) 143--150.

\bibitem{Li08} X. Li, Y. Shi, A survey on the Randi\'{c} index, \textit{MATCH Commun. Math. Comput. Chem.} \textbf{59} (2008) 127--156.

\bibitem{Li16} F. Li, Q. Ye, The general connectivity indices of fluoranthene-type benzenoid systems, \textit{Appl. Math. Comput.} \textbf{273} (2016) 897--911.

\bibitem{Liu-Ou-13} S. Liu, J. Ou, On maximal resonance of polyomino graphs, \textit{J. Math. Chem.} \textbf{51} (2013) 603--619.

\bibitem{Mansour17} T. Mansour, M. A. Rostami, S. Elumalai, B. A. Xavier, Correcting a paper on the Randi\'{c} and geometric-arithmetic indices, \textit{Turk. J. Math.} \textbf{41} (2017) 27--32.



\bibitem{Ramane17} H. S. Ramane, V. V. Manjalapur, I. Gutman, General sum-connectivity index, general product-connectivity index, general Zagreb index and coindices of line graph of subdivision graphs, \textit{AKCE Int. J. Graphs Comb.} \textbf{14} (2017) 92--100.

\bibitem{r3} M. Randi\'{c}, On characterization of molecular branching, \textit{J. Am. Chem. Soc.} \textbf{97} (1975) 6609--6615.



\bibitem{Tomescu16} I. Tomescu, On the general sum-connectivity index of connected graphs with given order and girth,
\textit{Electron. J. Graph Theory Appl.}
\textbf{4} (2016) 1--7.


\bibitem{Wang17} H. Wang, J.-B. Liu, S. Wang, W. Gao, S. Akhter, M. Imran, M. R. Farahani, Sharp bounds for the general sum-connectivity indices of transformation graphs,  \textit{Discrete Dyn. Nat. Soc.} \textbf{2017} (2017) 2941615.

\bibitem{z} Z. Yarahmadi, A. R. Ashrafi, S. Moradi, Extremal polyomino chains with respect to Zagreb indices, \textit{Appl. Math. Lett.} \textbf{25} (2012) 166-171.
    
\bibitem{Zhang-13} L. Zhang, S. Wei, F. Lu, The number of Kekul\'e structures of polyominos on the torus, \textit{J. Math. Chem.} \textbf{51} (2013) 354--368.
    

\bibitem{Zhou09} B. Zhou, N. Trinajsti\'{c}, On a novel connectivity index, \textit{J. Math. Chem.} \textbf{46} (2009) 1252--1270.

\bibitem{Zhou10} B. Zhou, N. Trinajsti\'{c}, On general sum-connectivity index, \textit{J. Math. Chem.} \textbf{47} (2010) 210--218.

\bibitem{Zhu16} Z. Zhu, H. Lu, On the general sum-connectivity index of tricyclic graphs, \textit{J. Appl. Math. Comput.} \textbf{51} (2016) 177--188.


















\end{thebibliography}
\end{document}